\documentclass[11pt]{article}
\usepackage{amsmath, amssymb, amsthm, amsfonts}
\usepackage{graphicx}
\usepackage{verbatim}
\usepackage{float}
\usepackage{color}
\usepackage{tikz}
\usepackage{hyperref}

\newtheorem{theorem}{Theorem} 
\newtheorem{lemma}[theorem]{Lemma} 
\newtheorem{proposition}[theorem]{Proposition}

\newtheorem{conjecture}[theorem]{Conjecture}

\theoremstyle{definition}

\theoremstyle{remark}

\begin{document}

\title{Minimal 3-regular Penny Graph}
\author{Alexander Karabegov \and Tanya Khovanova}
\date{}

\maketitle

\begin{abstract}
We prove that a 3-regular penny graph has at least 16 vertices and show that such a graph with 16 vertices exists.
\end{abstract}

\section{Introduction}

The first author met the second author during the All-Soviet Math Olympiad in Yerevan in 1974. Alexander was one year older than Tanya. In 1976, Tanya was still competing as a high school senior, while Alexander was already a freshman at Moscow State University. Alexander proposed the following two related puzzles for the 1976 Moscow Olympiad \cite{GT}.

\begin{quote}
    \textbf{Puzzle 1.} You are given a finite number of points in a plane. Prove that there exists a point with at most 3 closest neighbors.
\end{quote}

Just in case, by the \textit{closest neighbors} we mean all points at the minimal distance from a given point. This puzzle is connected to our main discussion, so we provide its solution in Section~\ref{sec:prelim}. We encourage the reader to solve it before reading our solution.

This puzzle implies that if a planar configuration is such that every point has the same number of closest neighbors, then that number is no greater than 3. This is where the second problem comes in.

\begin{quote}
    \textbf{Puzzle 2.} Can you place a finite number of points on the plane in such a way that each point has exactly $3$ closest neighbors?
\end{quote}

The second problem has an elegant solution with $24$ points chosen from a triangular grid, as seen in Figure~\ref{fig:24pointssolution}.

\begin{figure}[ht!]
\centering
    \begin{tikzpicture}[scale=0.8, line join=round]
  \coordinate (A1) at ( -2, 0);
  \coordinate (B1) at ( -2.5, {sqrt(3)/2});
  \coordinate (C1) at (-3, 0);
  \coordinate (D1) at ( -2.5, {-sqrt(3)/2});
   \coordinate (D2) at (-2, {sqrt(3)});
   \coordinate (A2) at ( -1, {sqrt(3)});
   \coordinate (C2) at ( -1.5, {3*sqrt(3)/2});
   \coordinate (B2) at ( -.5, {3*sqrt(3)/2});
   \coordinate (D3) at ( .5, {3*sqrt(3)/2});
   \coordinate (C3) at ( 1.5, {3*sqrt(3)/2});
   \coordinate (A3) at ( 1, {sqrt(3)});
   \coordinate (B3) at ( 2, {sqrt(3)});
   \coordinate (D4) at (-2, {-sqrt(3)});
   \coordinate (A4) at ( -1, {-sqrt(3)});
   \coordinate (C4) at ( -1.5, {-3*sqrt(3)/2});
   \coordinate (B4) at ( -.5, {-3*sqrt(3)/2});
   \coordinate (D5) at ( .5, {-3*sqrt(3)/2});
   \coordinate (C5) at ( 1.5, {-3*sqrt(3)/2});
   \coordinate (A5) at ( 1, {-sqrt(3)});
   \coordinate (B5) at ( 2, {-sqrt(3)});
   \coordinate (B6) at ( 2.5, {sqrt(3)/2});
   \coordinate (A6) at ( 2, 0);
   \coordinate (C6) at ( 3, 0);
   \coordinate (D6) at ( 2.5, {-sqrt(3)/2});
  \draw[thick] (A1)--(B1)--(C1)--(D1)--cycle;
  \draw[thick] (A1)--(C1);
  \draw[thick] (B1)--(D2);
  \draw[thick] (A2)--(D2);
  \draw[thick] (A2)--(C2);
  \draw[thick] (C2)--(D2);
  \draw[thick] (A2)--(B2);
  \draw[thick] (B2)--(C2);
  \draw[thick] (B2)--(D3);
  \draw[thick] (D3)--(C3);
  \draw[thick] (D3)--(A3);
  \draw[thick] (B3)--(C3);
  \draw[thick] (A3)--(B3);
  \draw[thick] (A3)--(C3);
  \draw[thick] (B6)--(B3);
  \draw[thick] (B6)--(C6);
  \draw[thick] (B6)--(A6);
  \draw[thick] (A6)--(C6);
  \draw[thick] (D6)--(C6);
  \draw[thick] (D6)--(A6);
  \draw[thick] (D1)--(D4);
  \draw[thick] (D4)--(A4);
  \draw[thick] (D4)--(C4);
  \draw[thick] (A4)--(C4);
  \draw[thick] (B4)--(A4);
  \draw[thick] (B4)--(C4);
  \draw[thick] (B4)--(D5);
  \draw[thick] (B5)--(C5);
  \draw[thick] (B5)--(A5);
  \draw[thick] (A5)--(C5);
  \draw[thick] (A5)--(D5);
  \draw[thick] (C5)--(D5);
  \draw[thick] (B5)--(C6);
  \foreach \P in {A1,B1,C1,D1,D2,A2,B2, C2,C3, D3,A3,B3,A4,B4,C4,D4,A5,B5,C5,D5,B6,A6,C6,D6}
    \fill (\P) circle[radius=0.5mm];
\end{tikzpicture}
\caption{A solution with 24 points}
\label{fig:24pointssolution}
\end{figure}
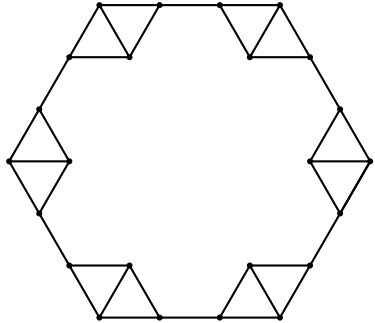

By the way, Tanya competed in the 1976 Moscow Olympiad and was given the first problem to solve; she is sure she solved it. The story continued almost $40$ years later, when Alexander emailed Tanya a configuration with $16$ points in Figure~\ref{fig:minimal} and the following conjecture.

\begin{figure}[ht!]
\centering
    \begin{tikzpicture}[scale=1., line join=round]
    \begin{scope}[rotate=45]
    \begin{scope}[shift={({sqrt(3)/2 + sqrt(2)/2},0)}, rotate=90]
  \coordinate (A1) at ( 1/2, 0);
  \coordinate (B1) at ( 0, {sqrt(3)/2});
  \coordinate (C1) at (-1/2, 0);
  \coordinate (D1) at ( 0, {-sqrt(3)/2});
 \end{scope}
  \draw[thick] (A1)--(B1)--(C1)--(D1)--cycle;
  \draw[thick] (A1)--(C1);
  \foreach \P in {A1,B1,C1,D1}
    \fill (\P) circle[radius=0.5mm];
    \begin{scope}[shift={({sqrt(3)/2 + sqrt(2)/2},{-(sqrt(3)+sqrt(2))})}, rotate=90]
  \coordinate (A2) at ( 1/2, 0);
  \coordinate (B2) at ( 0, {sqrt(3)/2});
  \coordinate (C2) at (-1/2, 0);
  \coordinate (D2) at ( 0, {-sqrt(3)/2});
 \end{scope}
  \draw[thick] (A2)--(B2)--(C2)--(D2)--cycle;
  \draw[thick] (A2)--(C2);
  \foreach \P in {A2,B2,C2,D2}
    \fill (\P) circle[radius=0.5mm];
   \begin{scope}[shift={(0,{-(sqrt(3)+sqrt(2))/2})}]
  \coordinate (A3) at ( 1/2, 0);
  \coordinate (B3) at ( 0, {sqrt(3)/2});
  \coordinate (C3) at (-1/2, 0);
  \coordinate (D3) at ( 0, {-sqrt(3)/2});
 \end{scope}
  \draw[thick] (A3)--(B3)--(C3)--(D3)--cycle;
  \draw[thick] (A3)--(C3);
  \foreach \P in {A3,B3,C3,D3}
    \fill (\P) circle[radius=0.5mm];
      \begin{scope}[shift={({sqrt(2)+sqrt(3)},{-(sqrt(3)+sqrt(2))/2})}]
  \coordinate (A4) at ( 1/2, 0);
  \coordinate (B4) at ( 0, {sqrt(3)/2});
  \coordinate (C4) at (-1/2, 0);
  \coordinate (D4) at ( 0, {-sqrt(3)/2});
 \end{scope}
  \draw[thick] (A4)--(B4)--(C4)--(D4)--cycle;
  \draw[thick] (A4)--(C4);
  \foreach \P in {A4,B4,C4,D4}
    \fill (\P) circle[radius=0.5mm];
    \draw[thick] (D2)--(D4);
    \draw[thick] (B1)--(B3);
     \draw[thick] (D1)--(B4);
     \draw[thick] (B2)--(D3);
     \end{scope}
\end{tikzpicture}
\caption{A solution with 16 points}
\label{fig:minimal}
\end{figure}
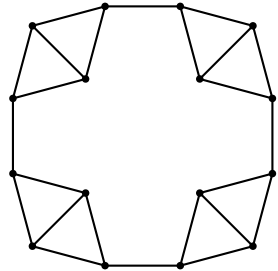

\begin{conjecture}[Karabegov’s Conjecture]
     Any finite planar point configuration in which every point has exactly $3$ closest neighbors must contain at least $16$ points.
\end{conjecture}

In this paper, we prove the conjecture. We start with preliminary observations and definitions.

\section{Preliminaries}
\label{sec:prelim}

We consider a finite number of points on a plane.

As we promised, we provide a solution to the first puzzle.

We prove it by contradiction. Suppose we built a set $M$ of points on a plane, so that each point has at least $4$ closest neighbors. Suppose $d$ is the smallest among all these distances. Suppose $M' \subset M$ is the subset of all points such that the distance from these points to their closest neighbors is $d$. Obviously, the set $M'$ is non-empty, and each point in it has at least $4$ closest neighbors, all of which must belong to $M'$.

Now, we build the convex hull of $M'$, which is a convex polygon.
We denote it by $H$. Suppose $A$ is one of the extreme points in $H$. Recall that an \textit{extreme point} on a convex hull is a point that is not a convex linear combination of any other points of the hull. The point $A$ is a vertex of $H$ and the internal angle at $A$ is less than $180^\circ$.

Let $B_1$, $B_2$, $B_3$, and $B_4$ be $4$ points in $M'$ that are a distance $d$ from $A$. Consider two distinct neighbors $B_i$ and $B_j$ of $A$. Any angle $\angle B_iAB_j$ has to be at least $60^\circ$, as $|B_iB_j| \geq d$. This contradicts our conclusion that $B_1$, $B_2$, $B_3$, and $B_4$ all fit inside an interior angle of less than $180^\circ$.

We are now back to building sets in which each point has exactly 3 closest neighbors. We consider all such sets. Suppose we found such a set with the minimal total number of points. We show that there can be at most one distance that realizes the closest neighbors.

\begin{proposition}
    The closest distance is unique within the minimal set.
\end{proposition}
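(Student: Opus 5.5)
The plan is to argue by contradiction using minimality, following the same extraction trick as in the solution of Puzzle 1. Let $M$ be a minimal configuration in which every point has exactly $3$ closest neighbors. Let $d$ be the smallest of the closest-neighbor distances, taken over all points of $M$. Let $M' \subseteq M$ be the set of points whose closest-neighbor distance equals $d$. Suppose, for contradiction, that the closest distance is not unique, so that some point of $M$ has closest-neighbor distance strictly greater than $d$. Then $M'$ is a proper subset of $M$, and $M'$ is nonempty by the choice of $d$.

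The key step is to check that $M'$ is itself a valid configuration: every point of $M'$ has exactly $3$ closest neighbors within $M'$. Take $P \in M'$. In $M$, $P$ has exactly three neighbors at distance $d$, and all other points of $M$ are at distance greater than $d$ from $P$. Each of these three neighbors $Q$ satisfies $|PQ| = d$. Since no point of $M$ has closest-neighbor distance below $d$, the closest-neighbor distance of $Q$ is exactly $d$, so $Q \in M'$.

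Now pass to the subset. Removing points from $M$ does not create new points at distance $\le d$ from $P$. Hence within $M'$ the minimal distance from $P$ is still $d$, and it is attained by exactly the same three points. So $M'$ is a configuration in which every point has exactly $3$ closest neighbors, and it has strictly fewer points than $M$. This contradicts the minimality of $M$, so all points of $M$ share the same closest distance.

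I do not expect a serious obstacle. The only point needing care is verifying that the three neighbors of a point of $M'$ lie in $M'$. This relies on $d$ being the global minimum, so that a neighbor at distance $d$ cannot have an even closer neighbor of its own. It is worth stating explicitly that deleting the points of $M \setminus M'$ can only remove candidates and never changes the distance-$d$ neighbor sets of points in $M'$.
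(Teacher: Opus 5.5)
Your proof is correct and follows the paper's argument: take the minimal closest-neighbor distance $d$, restrict to the subset of points realizing it, and observe that this subset is a strictly smaller valid configuration. You also spell out what the paper leaves implicit. Each neighbor at distance $d$ of a point in $M'$ itself lies in $M'$ because $d$ is the global minimum, and deleting $M \setminus M'$ cannot change the distance-$d$ neighbor sets.
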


\begin{proof}
    Suppose we have several such distances. Pick the minimum one, which we denote by $d$. Pick the subset $S$ of points such that they have the closest neighbors at a distance $d$. Each point in $S$ will have 3 closest neighbors, all of which belong to $S$. Thus, we can remove all other points and get a smaller configuration $S$.
\end{proof}

We are now in a situation where the closest distance is the same for every point. We can assume this distance to be equal to one. We consider our points and connect the ones at distance $1$ with an edge, thus obtaining a graph.

Our graph is a penny graph. 
A \textit{penny graph} is a graph where two vertices are connected by an edge if and only if their distance is the minimum distance among all pairs of vertices. Such graphs are called penny graphs because we can place a penny at each vertex, so that the coins will not overlap and will touch each other if and only if the corresponding vertices are connected. It is assumed that the diameter of a penny is $1$. There is a large body of research on penny graphs (see, e.g., \cite{DE,AS,KS}, and references therein). 

It is clear that a penny graph is a \textit{unit distance graph}, which is defined as a graph formed from a collection of points in the Euclidean plane by connecting two points whenever the distance between them is exactly $1$. 

Moreover, a penny graph is a \textit{matchstick graph}: a graph that can be drawn in the plane in such a way that its edges are line segments of length~$1$ that do not cross each other. That is, it is a graph that has an embedding that is simultaneously a unit distance graph and a planar graph. Figure~\ref{fig:udm} shows two unit distance graphs: the one on the right is also a matchstick graph.

\begin{figure}[ht!]
    \centering
        \begin{tikzpicture}[scale=2, line join=round]
    \begin{scope}[rotate=0, shift={(0,0)}]
  \coordinate (A1) at ( 1/2, 0);
  \coordinate (B1) at ( 0, {sqrt(3)/2});
  \coordinate (C1) at (-1/2, 0);
  \draw[thick] (A1)--(B1)--(C1)--cycle;
  \draw[thick] (A1)--(C1);
  \foreach \P in {A1,B1,C1}
    \fill (\P) circle[radius=0.3mm];
    \end{scope}
    \begin{scope}[shift={({sqrt(3)/2},.5)}, rotate=0]
  \coordinate (A2) at ( 1/2, 0);
  \coordinate (B2) at ( 0, {sqrt(3)/2});
  \coordinate (C2) at (-1/2, 0);
  \draw[thick] (A2)--(B2)--(C2)--cycle;
  \draw[thick] (A2)--(C2);
  \foreach \P in {A2,B2,C2}
    \fill (\P) circle[radius=0.3mm];
    \end{scope}
    \draw[thick] (A2)--(A1);
    \draw[thick] (B2)--(B1);
    \draw[thick] (C2)--(C1);
    \end{tikzpicture}
    \qquad
    \begin{tikzpicture}[scale=2, line join=round]
    \begin{scope}[rotate=0]
  \coordinate (A1) at ( 1/2, 0);
  \coordinate (B1) at ( 0, {sqrt(3)/2});
  \coordinate (C1) at (-1/2, 0);
  \coordinate (D1) at ( 0, {-sqrt(3)/2});
  \draw[thick] (A1)--(B1)--(C1)--(D1)--cycle;
  \draw[thick] (A1)--(C1);
  \foreach \P in {A1,B1,C1,D1}
    \fill (\P) circle[radius=0.3mm];
    \end{scope}
    \begin{scope}[shift={({sqrt(3)/2},1/2)}, rotate=0]
  \coordinate (A2) at ( 1/2, 0);
  \coordinate (B2) at ( 0, {sqrt(3)/2});
  \coordinate (C2) at (-1/2, 0);
  \coordinate (D2) at ( 0, {-sqrt(3)/2});
  \draw[thick] (A2)--(B2)--(C2)--(D2)--cycle;
  \draw[thick] (A2)--(C2);
  \foreach \P in {A2,B2,C2,D2}
    \fill (\P) circle[radius=0.3mm];
    \end{scope}
    \draw[thick] (B2)--(B1);
    \draw[thick] (D2)--(D1);
    \end{tikzpicture}
    \caption{Two unit distance graphs with a matchstick graph on the right}
    \label{fig:udm}
\end{figure}
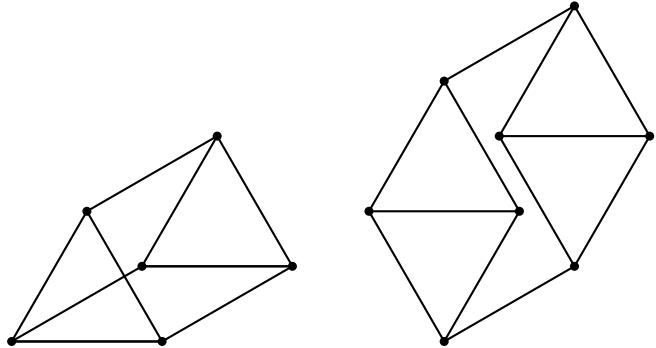

The graph on the left, the unit distance graph, has a planar embedding, but it does not have an isometric planar embedding, so it is not a matchstick graph.

Matchstick graphs allow two vertices to be at a smaller distance than the length of a matchstick. Thus, not every matchstick graph can be a penny graph, as, in a penny graph, the distance between any two vertices not connected by edges is longer than the length of an edge.

The minimal graph must be connected. Indeed, if it is disconnected, one component would keep the same properties and have fewer vertices. Thus, we will consider only connected penny graphs in the rest of the paper.

In our graph, each vertex has exactly $3$ neighbors. Such graphs are called \textit{$3$-regular graphs}.

By the degree sum formula, a $3$-regular graph with $n$ vertices has $\frac{3n}{2}$ edges. In particular, the number of vertices $n$ must be an even number. By the way, the minimal 3-regular matchstick graph has 8 vertices and is depicted on the right in Figure~\ref{fig:udm}.

To conclude, we want to prove that a 3-regular penny graph with 16 vertices, shown in Figure~\ref{fig:minimal}, is minimal.

\section{Building Blocks}

In our construction, we will use two unit distance graphs as building blocks. We call them a \textit{triangle} and a \textit{rhombus}. Figure~\ref{fig:trianglerhombus} shows both.

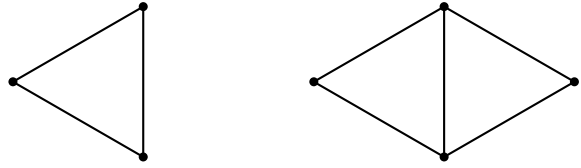
\begin{figure}[ht!]
    \centering
    \begin{tikzpicture}[scale=2, line join=round, rotate=90]
  \coordinate (E) at ( 0.5, 2);
  \coordinate (F) at ( 0, {sqrt(3)/2+2});
  \coordinate (G) at (-.5, 2);
  \draw[thick] (E)--(F)--(G)--cycle;
  \foreach \P in {E,F,G}
    \fill (\P) circle[radius=0.03];
  \coordinate (A) at ( 1/2, 0);
  \coordinate (B) at ( 0, {sqrt(3)/2});
  \coordinate (C) at (-1/2, 0);
  \coordinate (D) at ( 0, {-sqrt(3)/2});
  \draw[thick] (A)--(B)--(C)--(D)--cycle;
  \draw[thick] (A)--(C);
  \foreach \P in {A,B,C,D}
    \fill (\P) circle[radius=0.03];
\end{tikzpicture}
    \caption{Triangle and rhombus}
    \label{fig:trianglerhombus}
\end{figure}

Both constructions in Figures~\ref{fig:24pointssolution} and \ref{fig:minimal} consist of rhombuses connected to each other. The $24$-point construction consists of $6$ rhombuses, while the $16$-point construction consists of $4$ rhombuses.

What will happen if we try a similar construction with $3$ rhombuses? The image in Figure~\ref{fig:Threeboundaryrhombuses} shows such a configuration, which now has extra edges with the shortest distance. We now see $3$ points with more than three closest neighbors each, violating the condition. 

\begin{figure}[ht!]
    \centering
    \begin{tikzpicture}[scale=.8,  rotate=0]
   \coordinate (A) at ( 0, 0);
   \coordinate (B1) at ( 0, {1+sqrt(3)});
  \coordinate (B) at ( 0, 1);
  \coordinate (C) at ({sqrt(3)/2}, -1/2);
   \coordinate (C1) at ({3*sqrt(3)/2}, -3/2);
    \coordinate (D) at ({-sqrt(3)/2}, -1/2);
    \coordinate (D1) at ({-3*sqrt(3)/2}, -3/2);
  \draw[thick] (B)--(C)--(D)--cycle;
  \draw[line width=.3mm] (B)--(B1);
  \coordinate (K) at ( {sqrt(3)/2}, {-1/2-sqrt(3)});
  \coordinate (L) at ( {-sqrt(3)/2}, {-1/2-sqrt(3)});
  \foreach \P in {B,C,D, B1,C1,D1, K,L}
    \fill (\P) circle[radius=0.8mm];
  \draw[line width=.3mm] (K)--(C);
  \draw[line width=.3mm] (L)--(D);
  \draw[line width=.3mm] (K)--(L);
  \begin{scope}[rotate=120]
   \coordinate (C3) at ({sqrt(3)/2}, -1/2);
    \coordinate (D3) at ({-sqrt(3)/2}, -1/2);
  \coordinate (K3) at ( {sqrt(3)/2}, {-1/2-sqrt(3)});
  \coordinate (L3) at ( {-sqrt(3)/2}, {-1/2-sqrt(3)});
  \foreach \P in {K3,L3}
    \fill (\P) circle[radius=0.8mm];
  \draw[line width=.3mm] (K3)--(C3);
  \draw[line width=.3mm] (L3)--(D3);
  \draw[line width=.3mm] (K3)--(L3);
  \end{scope}
  \begin{scope}[rotate=-120]
   \coordinate (C2) at ({sqrt(3)/2}, -1/2);
    \coordinate (D2) at ({-sqrt(3)/2}, -1/2);
  \coordinate (K2) at ( {sqrt(3)/2}, {-1/2-sqrt(3)});
  \coordinate (L2) at ( {-sqrt(3)/2}, {-1/2-sqrt(3)});
  \foreach \P in {K2,L2}
    \fill (\P) circle[radius=0.8mm];
  \draw[line width=.3mm] (K2)--(C2);
  \draw[line width=.3mm] (L2)--(D2);
  \draw[line width=.3mm] (K2)--(L2);
  \end{scope}
  \draw[line width=.3mm] (L2)--(B1);
  \draw[line width=.3mm] (K3)--(B1);
  \draw[line width=.3mm] (L3)--(C1);
  \draw[line width=.3mm] (K)--(C1);
  \draw[line width=.3mm] (C)--(C1);
  \draw[line width=.3mm] (D)--(D1);
  \draw[line width=.3mm] (L)--(D1);
  \draw[line width=.3mm] (K2)--(D1);
\end{tikzpicture}
    \caption{A construction with $3$ rhombuses}
    \label{fig:Threeboundaryrhombuses}
\end{figure}
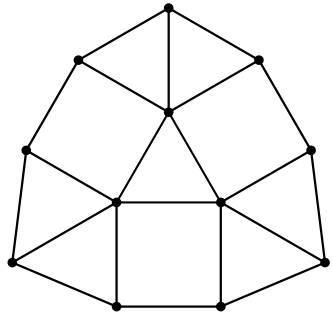

\section{Boundary}

An embedded planar graph divides the plane into regions called faces. Its \textit{boundary} is the set of its edges that are incident to the outer face. We call the edges on the boundary the \textit{outer edges}. All other edges will be referred to as \textit{inner edges}.

An edge is called a \textit{bridge} if the removal of it increases the number of connected components of the graph. A bridge that is an inner or an outer edge is called an \textit{inner} or an \textit{outer bridge}, respectively. The planar graph in Figure \ref{fig:bridges} has the inner bridges $AB$ and $KL$ and the outer bridges $CD$ and~$EF$.
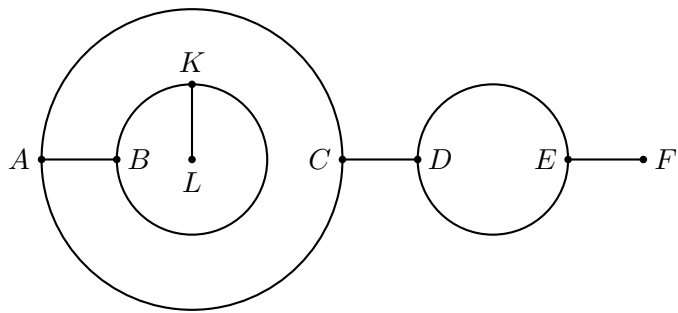
\begin{figure}[ht!]
\centering
    \begin{tikzpicture}[scale=1., line join=round]
    \coordinate (A) at ( -2, 0);
    \coordinate (B) at ( -1, 0);
    \coordinate (C) at (2,0);
    \coordinate (D) at (3, 0);
    \coordinate (E) at (5, 0);
    \coordinate (F) at (6, 0);
     \coordinate (K) at (0, 1);
      \coordinate (L) at (0, 0);
       \foreach \P in {A,B,C,D,E,F, K, L}
    \fill (\P) circle[radius=0.5mm];
     \node(A) at (-2.3, 0) {$A$};
      \node(B) at (-.7, 0) {$B$};
       \node(C) at (1.7, 0) {$C$};
        \node(D) at (3.3, 0) {$D$};
         \node(E) at (4.7, 0) {$E$};
          \node(F) at (6.3, 0) {$F$};
           \node(K) at (0, 1.3) {$K$};
            \node(L) at (0, -.3) {$L$};
        \draw [thick] (0, 0) circle (1);
        \draw [thick] (0, 0) circle (2);
        \draw [thick] (4, 0) circle (1);
        \draw [thick] (A)--(B);
    \draw [thick](C)--(D);
    \draw [thick](E)--(F);
     \draw [thick](K)--(L);
\end{tikzpicture}
\caption{Inner and outer bridges}
\label{fig:bridges}
\end{figure}

Consider a connected 3-regular penny graph. If all of its outer bridges are removed, we will be left with isolated vertices and subgraphs whose boundaries are polygons. We call these subgraphs {\it islands}.

These isolated points and islands connected by the outer bridges form a graph $T$. Each vertex of $T$ is an isolated point or an island. Each edge corresponds to an outer bridge of our original graph. That means that in $T$, every edge is a bridge, which implies that $T$ is a tree. A vertex of a tree that corresponds to an isolated vertex of the original graph has to have degree 3 in $T$. Hence, a leaf of the tree $T$ corresponds to an island with exactly one outer bridge connected to it. 

An example of such tree $T$ with one isolated vertex of degree $3$ shown as a solid dot and six islands represented by circles that are connected by bridges is given in Figure~\ref{fig:tree}. The leaf island circles are black, while the gray circles represent non-leaf islands.

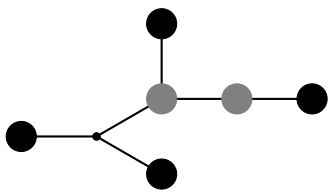
\begin{figure}[ht!]
\centering
    \begin{tikzpicture}[scale=1., line join=round]
    \coordinate (A) at ( 0, 0);
    \coordinate (B) at ( 1, 0);
    \coordinate (C0) at (1.866, 1.5);
    \coordinate (C1) at (1.866, .5);
       \coordinate (C2) at (1.866, -.5);
       \coordinate (D1) at (2.866, .5);
       \coordinate (E1) at (3.866, .5);
    \draw [thick] (A)--(B);
    \draw [thick](B)--(C1);
    \draw [thick](B)--(C2);
    \draw [thick](C0)--(C1);
    \draw [thick](C1)--(D1);
    \draw [thick](E1)--(D1);
    \filldraw[gray] (2.866, .5) circle (2mm);
     \filldraw (0,0) circle (2mm);
     \filldraw (1.866, 1.5) circle (2mm);
     \filldraw (3.866, .5) circle (2mm);
      \filldraw (1.866, -.5) circle (2mm);
  \filldraw (1,0) circle (.5mm);
  \filldraw[gray] (1.866,.5) circle (2mm);
\end{tikzpicture}
\caption{A tree $T$}
\label{fig:tree}
\end{figure}

An example of a leaf island of a 3-regular penny graph with an outer bridge shown by a dashed line is given in Figure \ref{fig:leaf}.

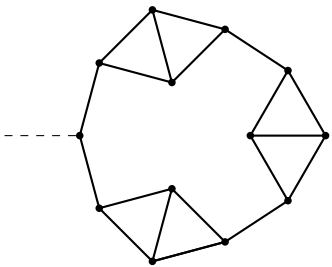
\begin{figure}[ht!]
\centering
    \begin{tikzpicture}[scale=1., line join=round]
  \coordinate (O) at ( -1, 0);
  \coordinate (A) at ( 0, 0);
  \coordinate (B1) at ( .259, .966);
   \coordinate (B2) at ( .259, -.966);
  \coordinate (C1) at (1.225, .707);
  \coordinate (C2) at (1.225, -.707);
  \coordinate (D1) at (.966, 1.673);
  \coordinate (D2) at (.966, -1.673);
  \coordinate (E1) at (1.932, 1.414);
   \coordinate (E2) at (1.932, -1.414);
     \coordinate (F1) at (2.768, .866);
     \coordinate (F2) at (2.768, -.866);
      \coordinate (G1) at (2.268, 0);
      \coordinate (G2) at (3.268, 0);
    \foreach \P in {A,B1,B2,C1,C2,D1,D2,E1,E2,F1,F2,G1,G2}
    \fill (\P) circle[radius=0.5mm];
\draw[dashed] (O)--(A);
    \draw [thick] (A)--(B1);
    \draw [thick](A)--(B2);
    \draw [thick](C1)--(B1);
    \draw [thick](C2)--(B2);
    \draw [thick](D1)--(B1);
    \draw [thick](D1)--(C1);
    \draw [thick](D2)--(B2);
    \draw [thick](D2)--(C2);
    \draw [thick](E2)--(C2);
    \draw [thick](D2)--(E2);
    \draw [thick] (E1)--(C1);
    \draw [thick](D1)--(E1);
    \draw [thick] (F1)--(E1);
    \draw [thick] (D2)--(E2);
    \draw [thick](F2)--(E2);
    \draw [thick](F2)--(G2);
    \draw [thick](F2)--(G1);
    \draw [thick](G1)--(G2);
    \draw [thick](F1)--(G2);
    \draw [thick](F1)--(G1);
\end{tikzpicture}
\caption{A leaf island}
\label{fig:leaf}
\end{figure}

First, we will prove that a 3-regular penny graph with no outer bridges has at least 16 vertices.

\section{3-regular Penny Graphs with no Outer Bridges}

Consider a 3-regular penny graph with no outer bridges. Its boundary is an $n$-gon whose sides are edges of length 1. In Figure~\ref{fig:boundarytypes}, we give an example of a portion of such a boundary made of thick edges, where the edges that are not incident to the boundary are not shown. Each outer edge is adjacent to two inner edges. We say that the sum of the angles between an outer edge and the adjacent inner edges is the {\it angle contribution} of the outer edge. The sum of angle contributions of all the outer edges is equal to the sum of the interior angles of the boundary $n$-gon, which is $180^\circ (n-2)$.

\begin{figure}[ht!]
\centering
    \begin{tikzpicture}[scale=1.3, rotate=-90]
    \coordinate (K) at ( 0, {-1-sqrt(3)/2});
     \coordinate (L) at ( 1, {-1-sqrt(3)/2});
     \coordinate (M) at ( 1/2, {-1-sqrt(3)});
     \coordinate (N) at ( 1/2, {-2-sqrt(3)});
    \coordinate (A) at ( 1/2, 0);
  \coordinate (B) at ( 0, {sqrt(3)/2});
  \coordinate (C) at (-1/2, 0);
  \coordinate (D) at ( 0, {-sqrt(3)/2});
  \coordinate (E) at ( {sqrt(2)/2}, {sqrt(2)/2 + sqrt(3)/2});
  \coordinate (F) at ( {1/2+sqrt(2)/2}, {sqrt(3)+sqrt(2)/2});
   \coordinate (G) at ( {sqrt(2)}, {sqrt(3)/2});
    \foreach \P in {A,B,C,D,E,F,G,K,L,M,N}
    \fill (\P) circle[radius=0.5mm];
    \draw[line width=.5mm] (K)--(D);
  \draw[line width=.5mm] (B)--(C);
  \draw[line width=.5mm] (C)--(D);
  \draw[line width=.1mm] (A)--(B);
   \draw[line width=.1mm] (D)--(A);
    \draw[line width=.1mm] (A)--(C);
     \draw[line width=.5mm] (B)--(E);
       \draw[line width=.5mm] (E)--(F);
        \draw[line width=.1mm] (E)--(G);
        \draw[line width=.1mm] (K)--(L);
        \draw[line width=.5mm] (K)--(M);
         \draw[line width=.1mm] (L)--(M);
           \draw[line width=.5mm] (N)--(M);
    \end{tikzpicture}
    \caption{Different types of outer edges}
    \label{fig:boundarytypes}
\end{figure}
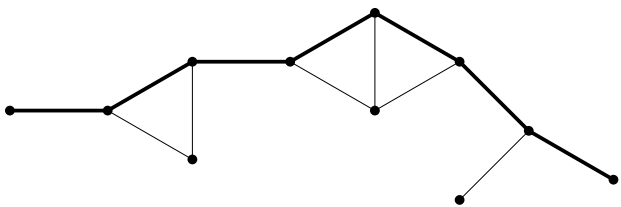

Suppose that there are $k$ sides of this boundary polygon that are edges of equilateral triangle faces of the graph. We call such outer edges \textit{triangle edges}, and we call the other $n-k$ outer edges \textit{free edges}. Each triangle edge contributes $120^\circ$ to the sum of interior angles. Now we want to look at the angle contribution of the free edges.

To this end, we will use the following geometric lemma.

\begin{lemma}
\label{lem:geom}
Let $KABL$ be a quadrilateral such that $|KA|=|AB|=|BL|=1$, $\angle KAB >60^\circ$, $\angle ABL >60^\circ$, and $\angle KAB + \angle ABL < 180^\circ$. Then $|KL|< 1$.
\end{lemma}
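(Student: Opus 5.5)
The plan is a direct coordinate computation. Since $KABL$ is a quadrilateral with interior angles $\alpha=\angle KAB$ and $\beta=\angle ABL$ at the consecutive vertices $A$ and $B$, the points $K$ and $L$ lie on the same side of the line $AB$. We place $A=(0,0)$ and $B=(1,0)$ with $K$ and $L$ in the upper half-plane. Because $|KA|=|BL|=1$, this gives
\[
K=(\cos\alpha,\sin\alpha),\qquad L=(1-\cos\beta,\sin\beta).
\]

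Next we expand $|KL|^2=(1-\cos\alpha-\cos\beta)^2+(\sin\beta-\sin\alpha)^2$. Using $\sin^2+\cos^2=1$ and $\cos\alpha\cos\beta-\sin\alpha\sin\beta=\cos(\alpha+\beta)$, this simplifies to
\[
|KL|^2 = 3-2\cos\alpha-2\cos\beta+2\cos(\alpha+\beta).
\]
So $|KL|<1$ is equivalent to
\[
1-\cos\alpha-\cos\beta+\cos(\alpha+\beta)<0 .
\]

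To settle the sign, set $s=(\alpha+\beta)/2$ and $d=(\alpha-\beta)/2$. Then $\cos\alpha+\cos\beta=2\cos s\cos d$ and $1+\cos(\alpha+\beta)=2\cos^2 s$. The left-hand side therefore factors as
\[
2\cos s\,(\cos s-\cos d).
\]
\begin{itemize}
\item The hypothesis $\alpha+\beta<180^\circ$ gives $s<90^\circ$. The hypotheses $\alpha,\beta>60^\circ$ give $s>60^\circ$. Hence $\cos s>0$.
\item Since $\alpha,\beta>0$, we have $|d|<s$. Both $|d|$ and $s$ lie in $[0,90^\circ)$, where cosine is strictly decreasing, so $\cos d>\cos s$.
\end{itemize}
The product is therefore negative, which gives $|KL|<1$.

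The only delicate point is justifying that $K$ and $L$ lie on the same side of $AB$; this follows from $KABL$ being a (simple) quadrilateral whose interior angles at $A$ and $B$ are the given ones. If they were on opposite sides, the formula would change and the claim would fail. The bounds $\alpha,\beta>60^\circ$ are used only to make $\cos s>0$ comfortable; the factorization shows that positivity of the angles together with $\alpha+\beta<180^\circ$ already suffices.
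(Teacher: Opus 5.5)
Your proof is correct, and the paper gives nothing to compare it against step by step: the authors omit their proof of Lemma~\ref{lem:geom}, saying only that a ChatGPT-generated proof was verified and is ``technical and not very elegant.''

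Your coordinate computation, with the factorization
\[
|KL|^2-1=4\cos s\,(\cos s-\cos d),\qquad s=\tfrac{\alpha+\beta}{2},\quad d=\tfrac{\alpha-\beta}{2},
\]
is a short, self-contained replacement, and it gives more than the lemma states.

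\begin{itemize}
\item As you note, it shows that the hypotheses $\alpha,\beta>60^\circ$ are unnecessary.
\item It also shows that, for $0^\circ<\alpha,\beta<180^\circ$ with $K,L$ on the same side of $AB$, the sign of $|KL|-1$ is the sign of $-\cos s$.
\item Hence $\alpha+\beta=180^\circ$ gives $|KL|=1$, which is the parallelogram case in the proof of Theorem~\ref{thm:main}.
\item Likewise, $|KL|>1$ forces $\alpha+\beta>180^\circ$. This is the strict form invoked in the proof of Lemma~\ref{lemma:leafisland}; the lemma as stated only yields $\alpha+\beta\ge 180^\circ$ there.
\end{itemize}

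The one step you assert without argument is that $K$ and $L$ lie on the same side of $AB$. It closes in a line. Near the edge $AB$, the interior of the quadrilateral lies on one side of the line $AB$. Since $\alpha,\beta<180^\circ$, the rays $AK$ and $BL$ both point into that side.

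Your computation uses nothing else about $KABL$. That is convenient in Lemma~\ref{lem:freeedge}, where one only knows that the two angles are measured on the interior side of the boundary polygon.
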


We asked ChatGPT to prove this lemma and verified the proof. The proof is technical and not very elegant, so we omit it. Instead, we show an image illustrating Lemma~\ref{lem:geom} in Figure~\ref{fig:proofoflemma}, so the statement is easier to follow.

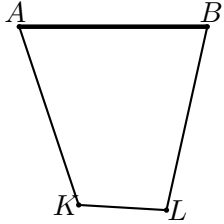
\begin{figure}[ht!]
\centering
    \begin{tikzpicture}[scale=0.5, line join=round, rotate=90]
  \coordinate (K) at ( 4.1191, -.9153);
  \coordinate (A) at ( 9, -2.);
  \coordinate (B) at ( 9, 3);
  \coordinate (L) at (4.2566, 1.4189);
  \foreach \P in {K, B, A, L}
    \fill (\P) circle[radius=0.7mm];
    \draw [thick](L)--(K)--(A)--(B)--cycle;
    \draw[line width=.6mm] (B)--(A);
    \node(L) at ( 4.1191, -1.2) {$L$};
    \node(B) at (9.4,-2.1) {$B$};
    \node(A) at (9.4, 3.1) {$A$};
    \node(K) at (4.2566, 1.75) {$K$};
\end{tikzpicture}
\caption{The quadrilateral in Lemma~\ref{lem:geom}}
\label{fig:proofoflemma}
\end{figure}

\begin{lemma}
\label{lem:freeedge}
A free edge contributes at least $180^\circ$ to the sum of the interior angles.
\end{lemma}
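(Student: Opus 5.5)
Let the free edge be $AB$. Since the graph is 3-regular and has no outer bridges, $A$ has exactly one inner edge $AK$ and $B$ has exactly one inner edge $BL$, besides its two outer edges. The angle contribution of $AB$ is $\alpha+\beta$, where $\alpha=\angle KAB$ and $\beta=\angle ABL$, both measured inside the boundary polygon. We argue by contradiction: assume $\alpha+\beta<180^\circ$ and aim to apply Lemma~\ref{lem:geom} to the quadrilateral $KABL$.

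First we check the hypotheses of Lemma~\ref{lem:geom}. The lengths satisfy $|KA|=|AB|=|BL|=1$. In a penny graph any two neighbors of a vertex are at distance at least $1$, so the angle between two edges at a common vertex is at least $60^\circ$. Equality holds only when the two other endpoints are adjacent and form an equilateral triangle. If $\alpha=60^\circ$, then $|KB|=1$, so $KB$ is an edge and $KAB$ is a triangle. Because $AK$ is the only inner edge at $A$, the sector of angle $\alpha$ at $A$ between $AB$ and $AK$ contains no other edges. Hence the triangle $KAB$ is a face of the graph with $AB$ as a side, which makes $AB$ a triangle edge, contradicting freeness. The case $\beta=60^\circ$ is symmetric. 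So $\alpha>60^\circ$ and $\beta>60^\circ$.

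Next we handle degeneracies. If $K=L$, then $K$ is adjacent to both $A$ and $B$. The triangle $ABK$ then has all sides $1$, and again it is a face with side $AB$, since no edges at $A$ or $B$ enter it; this contradicts freeness. So $K\neq L$. Because $\alpha+\beta<180^\circ$ and both angles are interior, the rays $AK$ and $BL$ lie on the same side of line $AB$, pointing inside the polygon, so $KABL$ is a genuine quadrilateral configuration. If planarity concerns arise, the segments $AK$ and $BL$ cannot cross because the graph is a matchstick graph. Lemma~\ref{lem:geom} now gives $|KL|<1$. But $K$ and $L$ are distinct vertices of a penny graph, so $|KL|\ge 1$, a contradiction. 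Therefore $\alpha+\beta\ge 180^\circ$.

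The main obstacle is the rigorous justification that $\alpha=60^\circ$, or $K=L$, forces a triangular face on $AB$, and that the configuration matches the shape required by Lemma~\ref{lem:geom}, with the angles interior and on the same side. The clean way to handle both is to use that each endpoint of a free edge has exactly one inner edge, so the wedges of angles $\alpha$ and $\beta$ contain no other edges.
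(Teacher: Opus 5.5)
Your proof is correct and follows essentially the same route as the paper. Freeness of $AB$ forces $\alpha,\beta>60^\circ$ (you rule out $60^\circ$ directly via a triangular face, where the paper deduces $|BK|>1$ and $|AL|>1$ from $K\neq L$); then, assuming $\alpha+\beta<180^\circ$, Lemma~\ref{lem:geom} gives $|KL|<1$, contradicting the penny-graph property. Your separate $K\neq L$ step is already covered by the $\alpha>60^\circ$ argument.
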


\begin{proof}
Let $AB$ be a free edge. Each vertex on the boundary has exactly one inner edge that comes out of it. Let $AK$ and $BL$ be such inner edges. The fact that $AB$ is a free edge guarantees that the vertices $K$ and $L$ are distinct. Therefore, $|AL|>1$ and $|BK|>1$, which implies that $\angle ABL > 60^\circ$ and $\angle KAB>60^\circ$. We also know that $KABL$ is a quadrilateral with $|KA|=|AB|=|BL|=1$. The free edge $AB$ contributes the angle $\angle KAB + \angle ABL$ to the sum of the interior angles. Suppose that this contribution is less than $180^\circ$. Then the quadrilateral $KABL$ satisfies the assumptions of Lemma~\ref{lem:geom}, which implies that $|KL| < 1$. This contradicts the fact that no two vertices are at distances shorter than~$1$.
\end{proof}

We are ready to prove our conjecture for a graph with no outer bridges.

\begin{theorem}\label{thm:main}
    A minimal 3-regular penny graph with no outer bridges has $16$ vertices.
\end{theorem}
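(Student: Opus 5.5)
The plan is to combine the angle count with a structural description of the triangles. First, existence: the $16$-vertex configuration of Figure~\ref{fig:minimal} is a $3$-regular penny graph with no outer bridges, so it suffices to prove that every such graph has at least $16$ vertices. Let the boundary be an $n$-gon with $k$ triangle edges and $n-k$ free edges. By Lemma~\ref{lem:freeedge} and the $120^\circ$ contribution of each triangle edge,
\[
180^\circ(n-2) \;\ge\; 120^\circ k + 180^\circ (n-k),
\]
which gives $k\ge 6$. So the boundary carries at least six triangle edges, and the remaining work is to turn this into a vertex count.

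Second, I would establish the local structure of triangles in a $3$-regular penny graph.
\begin{itemize}
\item Two triangles cannot share exactly one vertex, since that vertex would have degree at least $4$.
\item Two triangles sharing an edge form a rhombus (unit distance and degree $3$ force the $60^\circ$ geometry), and no further triangle can be attached to it: $K_4$ is not a penny graph, and degrees are already full at the diagonal vertices.
\end{itemize}
Hence the triangles are grouped into vertex-disjoint clusters: isolated triangles ($3$ vertices, one triangle) and rhombuses ($4$ vertices, two triangles). Next, a single triangle cannot have two outer edges. If it did, the common vertex would have interior boundary angle $60^\circ$. Its third edge must point into the interior, which would create a distance less than $1$, or else point outward, which would make it an outer bridge. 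Consequently each cluster carries at most as many triangle edges as it has triangles, so $k$ triangle edges use at least $2k\ge 12$ vertices. This is not yet enough.

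Third, and this is the main obstacle, I would push the bound from $12$ to $16$. I would try to sharpen the inequality by showing that equality in Lemma~\ref{lem:freeedge} is rigid. If a free edge contributes exactly $180^\circ$, then $KABL$ is degenerate, with $|KL|=1$ forced, so $K$ and $L$ are adjacent. With this I would do a case analysis on the number of clusters carrying triangle edges. With $k\ge 6$ and at most two triangle edges per cluster, there are at least three clusters.
\begin{itemize}
\item Three clusters: all must be rhombuses, each with two outer edges and all free edges contributing exactly $180^\circ$. Tracking the forced geometry should reproduce the rigid configuration of Figure~\ref{fig:Threeboundaryrhombuses}, where extra unit distances appear, a contradiction.
\item At least four clusters: these already account for at least $4\cdot 3=12$ vertices, and I would need to show that either they are rhombuses (giving $16$) or extra vertices are forced. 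Here I would use that consecutive triangle edges of different clusters cannot meet at a vertex, so free edges separate the clusters. I would also use that each isolated triangle has a vertex whose third edge goes to a vertex outside it.
\end{itemize}
Combined with parity ($n$ even), I would rule out $12$ and $14$ vertices by this bookkeeping.
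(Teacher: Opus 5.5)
Your first two steps are correct. You are in fact more careful than the paper: you justify that triangles are vertex-disjoint and that a triangle outside a rhombus has only one outer edge, and the paper uses both facts silently. The gap is the third step, which is a plan rather than a proof.

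In the three-cluster case you already have equality at every free edge. Instead of using it locally, you appeal to the forced geometry ``reproducing'' Figure~\ref{fig:Threeboundaryrhombuses}. That claim is never justified. It also overlooks that a $14$-vertex graph may have two vertices outside the three rhombuses, so there is no single rigid picture to reconstruct. In the four-cluster case you never obtain that the angle inequality is tight, so your rigidity observation cannot be applied. The bookkeeping that is supposed to exclude $12$ and $14$ vertices is never specified.

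The missing idea is to assume at most $14$ vertices and use your cluster count as an \emph{upper} bound on $k$, not as a lower bound on the number of vertices. Suppose $c_r$ rhombuses and $c_t$ isolated triangles carry triangle edges. Then
\[
6\le k\le c_t+2c_r \quad\text{and}\quad 3c_t+4c_r\le 14 .
\]
The only solutions are $(c_r,c_t)=(3,0)$ and $(2,2)$, and in both $k=6$. The paper reaches the same point from $2k\le 14$ plus the fact that rhombus edges come in pairs, which excludes $k=7$. So the inequality is an equality, every free edge contributes exactly $180^\circ$, and a boundary rhombus $PQRS$ exists. Take it with short diagonal $PR$ and outer edges $QP$ and $PS$.

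By your second step, $QR$ is not outer. So the other outer edge at $Q$ is $QX$, where $X$ lies outside the rhombus. This edge is free, since $P$ and $R$ are saturated, and the inner edge at $Q$ is $QR$. Let $XK$ be the inner edge at $X$. Equality makes $KXQR$ a parallelogram, so $|KR|=1$ and $K$ must be adjacent to $R$.

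But $R$ already has its three neighbors $P$, $Q$, $S$, and $K$ is none of them. $K\neq Q$ trivially, and $K\neq P$ because $P$ is saturated. $K\neq S$ because the only points at distance $1$ from both $Q$ and $S$ are $P$ and $R$. This single local contradiction settles every case at once, with no case analysis on clusters and no reconstruction of a global configuration.
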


\begin{proof}
Consider the boundary of such a graph. Each triangle edge contributes $120^\circ$ to the sum of interior angles of the boundary polygon. Lemma~\ref{lem:freeedge} shows that a free edge contributes at least~$180^\circ$.

Recall that our boundary polygon has $n$ edges with $k$ triangle edges. By subtracting the angle contribution of the triangle edges, we get that the total angle contribution of the $n-k$ free edges is equal to $180(n-2) - 120k$ degrees. On the other hand, it is at least $180(n-k)$ degrees. Therefore, we have the inequality
\begin{equation}\label{eqn:ineq}
    180(n-2) - 120k \geq 180(n-k),
\end{equation}
which simplifies to
\[
k \geq 6.
\]

We want to prove by contradiction that a 3-regular penny graph with no outer bridges has at least 16 vertices. Since the number of vertices of a 3-regular graph must be even, we will assume that our penny graph has at most 14 vertices.

Since each boundary rhombus has $2$ outer edges and $4$ vertices, we can say that each of its outer edges contributes $2$ vertices to the graph. Each triangle edge that is not part of a rhombus contributes $3$ vertices to the graph. Thus, each triangle edge contributes at least 2 vertices, which implies that $2k \le 14$.

Therefore, $6 \le k \le 7$. Suppose $k=7$, then each triangle edge contributes exactly two vertices, and therefore belongs to a rhombus. Since each boundary rhombus has exactly $2$ triangle edges, the total number of triangle edges that belong to rhombuses is even, and cannot be equal to 7. 

Thus, $k=6$, and Eq.~\ref{eqn:ineq} is tight, implying that each free edge contributes exactly $180$ degrees.

Suppose the number of triangle edges that belong to rhombuses is $r$, then the number of triangle edges that do not belong to rhombuses is $6-r$, and all of them contribute 
\[3(6-r) + 2r = 18 - r\]
vertices. As this number is not more than 14, we get that $r \geq 4$. Therefore, there are at least two boundary rhombuses.

Suppose that a free edge $AB$ with inner edges $AK$ and $BL$ is adjacent to a boundary rhombus. Suppose that the vertex $B$ belongs to a rhombus, implying that $L$ is also part of that rhombus and has $3$ edges inside the rhombus. If $\angle KAB + \angle ABL = 180^\circ$, then $KABL$ is a parallelogram and $|KL| = 1$, implying that $L$ has an extra edge, leading to a contradiction, as shown in Figure~\ref{fig:proofoftheorem}.

\begin{figure}[ht!]
\centering
    \begin{tikzpicture}[scale=0.4]
 \coordinate (K) at (-.8682, -4.924);
  \coordinate (L) at ( 4.132, -4.924);
  \coordinate (A) at ( 0, 0);
  \coordinate (B) at ( 5, 0);
  \coordinate (C) at (8.83, -3.214);
  \coordinate (D) at (7.962, -8.138);
  \foreach \P in {K, L, B, C, A, D}
    \fill (\P) circle[radius=1.5mm];
    \draw [line width=.6mm](B)--(A);
    \draw [line width=.6mm](C)--(B);
    \draw[thick] (A)--(K);
    \draw[thick] (L)--(B);
    \draw[thick] (C)--(L);
    \draw[thick] (L)--(K);
    \draw[thick] (L)--(D);
    \draw[line width=.6mm] (C)--(D);
    \node(K) at ( -.95, -5.6) {$K$};
    \node(L) at (4, -5.6) {$L$};
    \node(A) at (0,.7) {$A$};
    \node(B) at (5.1, .7) {$B$};
    \end{tikzpicture}
\caption{The boundary in Theorem \ref{thm:main}}
\label{fig:proofoftheorem}
\end{figure}
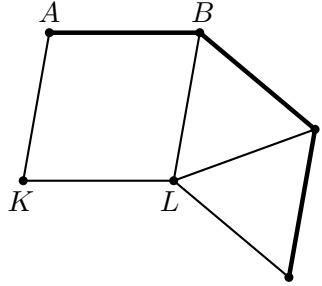

Figure~\ref{fig:minimal} shows a $3$-regular penny graph with $16$ vertices and no outer bridges. Thus, $16$ is the smallest number of vertices in a $3$-regular penny graph with no outer bridges.
\end{proof}

\section{Penny Graphs with Outer Bridges}

If a 3-regular penny graph has an outer bridge, then it has at least two leaf islands. The vertex incident to the outer bridge in each leaf island is of degree $2$ inside the island, and all other vertices are of degree~$3$. As the sum of the degrees of all vertices needs to be even, it follows that the number of vertices of degree $3$ must be even, and therefore, the total number of vertices in a leaf island is odd.

In order to prove that a 3-regular penny graph with outer bridges has at least $16$ vertices, it suffices to prove that a leaf island has more than $7$ vertices.

\begin{lemma}\label{lemma:leafisland}
A leaf island has more than $7$ vertices.
\end{lemma}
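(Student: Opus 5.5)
The plan is to exploit parity and then rule out the small cases $1,3,5,7$ by combining the angle-counting method of Theorem~\ref{thm:main} with a short combinatorial case analysis. Since a leaf island has an odd number of vertices, it suffices to show that it cannot have $1$, $3$, $5$ or $7$ vertices.

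Let $P$ be the island vertex incident to the outer bridge. Then $P$ has degree $2$ inside the island, and every other island vertex has degree $3$ there. An island with $1$ vertex is impossible, because then $P$ would have degree $3$ in $T$. An island with $3$ vertices is impossible, because all three vertices would have to be pairwise adjacent, giving degree $2$ to the two non-$P$ vertices.

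For $5$ and $7$ vertices I would first run the angle argument on the island's boundary polygon, with $m$ sides.
\begin{itemize}
\item Every boundary vertex other than $P$ has exactly one inner edge, so Lemma~\ref{lem:freeedge} applies verbatim to every boundary edge not incident to $P$. Each such edge contributes $120^\circ$ if it is a triangle edge and at least $180^\circ$ if it is free.
\item The bridge leaves $P$ on the outside and makes angles of at least $60^\circ$ with both boundary edges at $P$. Hence the interior angle at $P$ is at most $240^\circ$.
\item Let $k$ be the number of triangle edges among the $m-2$ boundary edges not incident to $P$, and let $X$ be the total angle attributed to the two boundary edges at $P$. Writing $180^\circ(m-2)$ as the sum of all contributions and comparing gives $X \le 60k$.
\item Each of the three angles composing $X$ is at least $60^\circ$, because in a penny graph two edges at a common vertex make an angle of at least $60^\circ$. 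So $X\ge 180^\circ$ and $k\ge 3$. I would try to sharpen this by treating separately the cases where $P$ lies in a triangle face and where it does not. In the latter case one can redo the quadrilateral argument of Lemma~\ref{lem:geom} on the edges at $P$ to get a larger lower bound on $X$.
\end{itemize}

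Next comes the vertex count, as in Theorem~\ref{thm:main}. Each boundary triangle edge requires a third vertex, and a rhombus accounts for two triangle edges using four vertices. With $k\ge 3$ (or more after sharpening), on $5$ vertices this already forces overlapping triangles. The island would then consist of a rhombus plus one vertex, or of two triangles sharing a vertex. Both are easy to kill directly: in each, degree and distance constraints force either a vertex of wrong degree or a distance below $1$. For $7$ vertices (degree sequence $2,3,3,3,3,3,3$, so $10$ edges), the few configurations compatible with $k\ge 3$ triangle edges are built from rhombuses and triangles, and I would enumerate them. Typical candidates are a rhombus plus a triangle joined by edges, with $P$ the odd vertex. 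In each, Lemma~\ref{lem:geom} or the parallelogram argument from the proof of Theorem~\ref{thm:main} should produce either a forbidden extra unit edge or a distance less than $1$.

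The main obstacle is this seven-vertex case analysis. The angle inequality alone only gives $k\ge 3$, which is too weak, so the crux is getting a good lower bound on $X$ near $P$ and then checking that the enumeration of small planar graphs with the given degree sequence is complete. I would first try to push the bound to $k\ge 4$, since two rhombuses already need $8$ vertices.
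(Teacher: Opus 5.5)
Your setup is correct. The parity reduction, the disposal of $1$ and $3$ vertices, and the inequality $X\le 60k$ with $X\ge 180^\circ$ (so $k\ge 3$) all hold. But the proposal stops exactly where the content of the lemma lies. For $5$ and $7$ vertices you prove nothing. You say you ``would enumerate'' the configurations and list ``typical candidates'' without showing the list is exhaustive. You defer both the elimination of each candidate and an unspecified sharpening of the bound on $X$. Your $5$-vertex list is also off: two triangles sharing a vertex cannot occur, since the shared vertex would have degree $4$. As written, this is a plan, not a proof.

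The missing step is to organize the enumeration by the number $m$ of boundary vertices rather than by triangles and rhombuses. Done that way, your bound $k\ge 3$ already suffices, contrary to your remark that it is too weak. Since $k\le m-2$, you get $m\ge 5$. The cases are then as follows.

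\begin{itemize}
\item If all vertices lie on the boundary ($m=5$ for $5$ vertices, $m=7$ for $7$), the non-$P$ vertices are paired by non-crossing chords along the path $A_2,\dots,A_m$. An innermost chord then joins two boundary-adjacent vertices, which gives a double edge.
\item If $m=6$ with one interior vertex $B$, the only planar possibility is the chord $A_2A_6$ with $B$ adjacent to $A_3,A_4,A_5$. The triangle $PA_2A_6$ contributes no edge to $k$, and $A_2A_3$, $A_5A_6$ lie on a pentagonal face, so $k=2$.
\item If $m=5$ with two interior vertices $B,C$, count edge-ends: $A_2,\dots,A_5$ supply four and $B,C$ supply six. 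This forces $BC$ to be an edge, leaves no chords, and makes each of $B,C$ adjacent to two boundary vertices. The two planar arrangements give $k=2$ and $k=1$.
\end{itemize}

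Every case has $k\le 2$, a contradiction. Your ``typical candidate'' (a triangle at $P$ joined to a rhombus) is exactly the $m=6$ configuration, and $k=2$ kills it with no further use of Lemma~\ref{lem:geom}.

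The paper instead counts angles per vertex: at least $120^\circ$ at each degree-$3$ boundary vertex and at least $60^\circ$ at $P$. This gives only $n\ge 5$. The paper must then rule out $n=5$ separately, via the non-equilateral $120^\circ,120^\circ,120^\circ,120^\circ,60^\circ$ pentagon. It also rules out the $n=6$ configuration by applying Lemma~\ref{lem:geom} to the pentagon $A_2A_3BA_5A_6$. Your per-edge count, built on Lemma~\ref{lem:freeedge}, is stronger and would absorb both geometric steps once you actually carry out the enumeration.
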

\begin{proof}
Suppose a leaf island has at most $7$ vertices and $n$ boundary vertices.

Each boundary vertex of degree $3$ contributes at least $120^\circ$ to the sum of the interior angles of the boundary polygon, while the boundary vertex of degree $2$ contributes at least $60^\circ$. Thus, we obtain an inequality
\[
180(n-2) \geq 120(n-1) + 60,
\]
which simplifies to $n \geq 5$. If $n=5$, then the interior angles of the boundary pentagon at the vertices of degree $3$ are equal to $120^\circ$, and the angle at the vertex of degree 2 is equal to $60^\circ$. A pentagon with such angles cannot be equilateral. If the three sides adjacent to $120^\circ$ angles on both ends are of length $1$, then the two sides adjacent to the $60^\circ$ angle are of length $2$, as shown in Figure~\ref{fig:pentagon}. Therefore, $n \geq 6$.
\begin{figure}[ht!]
\centering
    \begin{tikzpicture}[scale=1]
     \coordinate (A) at (0,0);
      \coordinate (B) at ( 1, 0);
       \coordinate (C) at (2, 0);
     \coordinate (D) at ( .5, -.866);
     \coordinate (E) at ( 1.5, -.866);
     \coordinate (F) at ( 1, 1.732);
      \coordinate (G) at ( 1, 2.732);
 \foreach \P in {C, A, D, E, F}
    \fill (\P) circle[radius=0.7mm];
\draw [line width=.4mm](D)--(A);
\draw [line width=.4mm](E)--(D);
\draw [line width=.4mm](E)--(C);
\draw [line width=.4mm](F)--(A);
\draw [line width=.4mm](C)--(F);
    \end{tikzpicture}
\caption{A pentagon from the proof of Lemma \ref{lemma:leafisland}}
\label{fig:pentagon}
\end{figure}
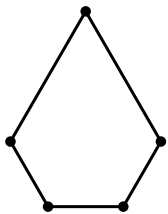

As the total number of vertices in a leaf island is odd, we have two cases:
\begin{itemize}
    \item $n=7$, and the island does not have any other vertices.
    \item $n=6$, and there is one inner vertex.
\end{itemize}

Suppose $n=7$, then all inner edges need to connect boundary vertices to each other. Let us label the boundary vertices in order as $A_1$, $A_2$, $A_3$, $A_4$, $A_5$, $A_6$, and $A_7$, with vertex $A_1$ of degree $2$. Consider the edge coming out of $A_4$. It cannot connect to $A_2$, as, in this case, the inner edge out of $A_3$ is blocked. It cannot connect to $A_6$ for a similar reason. It cannot connect to $A_7$, as the inner edges coming out of $A_2$ and $A_3$ are blocked.

It follows that $n=6$. We label the boundary vertices in order as $A_1$, $A_2$, $A_3$, $A_4$, $A_5$, and $A_6$, with vertex $A_1$ of degree $2$. There is also one inner vertex labeled by $B$, which is connected to $3$ boundary vertices, and the other two boundary vertices are connected to each other. By a similar argument to the case of $n=7$, the inner vertex $B$ needs to be connected to $A_3$, $A_4$, and $A_5$, while $A_2$ and $A_6$ have to be connected to each other. Therefore, the vertices $A_1$, $A_2$, and $A_3$ form an equilateral triangle, and the remaining four vertices form a rhombus. 

Consider the equilateral pentagon $A_2A_3BA_5A_6$. We have $\angle A_3BA_5 = 240^\circ$. As the distance $|BA_2| > 1$, implying that $\angle A_2A_3B > 60^\circ$. Similarly, $\angle BA_5A_6 > 60^\circ$. In addition, $\angle A_6A_2A_3 > 60^\circ$ and $\angle A_5A_6A_2 > 60^\circ$, and $|A_3A_5| > 1$, which allows us to apply Lemma~\ref{lem:geom} to conclude that $\angle A_6A_2A_3 + \angle A_5A_6A_2 > 180^\circ$. Summing up all the interior angles in the pentagon $A_2A_3BA_5A_6$, we get that their total sum is greater than $240 + 60 + 60 + 180 = 540$ degrees. Thus, such a pentagon does not exist.

We conclude that the number of vertices of a leaf island is greater than~$7$.
\end{proof}

Combining the results of Theorem~\ref{thm:main} and of Lemma~\ref{lemma:leafisland}, we obtain the following theorem.

\begin{theorem}
    A minimal 3-regular penny graph has 16 vertices.
\end{theorem}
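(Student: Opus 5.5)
The statement combines the two cases already handled, so the plan is a case split on whether the graph has an outer bridge, plus the existence part supplied by Figure~\ref{fig:minimal}. By the Preliminaries, a minimal configuration in which every point has exactly $3$ closest neighbors is a connected $3$-regular penny graph. It therefore suffices to show that every connected $3$-regular penny graph has at least $16$ vertices, and then to note that the $16$-vertex graph of Figure~\ref{fig:minimal} attains this bound.

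The first case is a graph with no outer bridges. Here Theorem~\ref{thm:main} applies directly: the angle-counting argument forces $k\ge 6$ triangle edges on the boundary, and the rhombus analysis then rules out $14$ or fewer vertices. So such a graph has at least $16$ vertices.

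The second case is a graph with at least one outer bridge. I would use the tree $T$ from the Boundary section, whose vertices are the islands and isolated points and whose edges are the outer bridges. Because $T$ has at least one edge, it is a tree with at least two leaves. A leaf of $T$ cannot be an isolated vertex, since such a vertex has degree $3$ in $T$. Hence every leaf of $T$ is a leaf island. The leaf islands are pairwise vertex-disjoint, so the graph has at least two of them. By Lemma~\ref{lemma:leafisland} each leaf island has more than $7$ vertices, and by the parity remark preceding that lemma its vertex count is odd, so each has at least $9$. The graph therefore has at least $18\ge 16$ vertices. (Even without the parity step, $8+8=16$ suffices.)

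The main point to verify is that the notion of a leaf island is well defined and that Lemma~\ref{lemma:leafisland} applies to every leaf of $T$. In particular, removing the outer bridges must leave components whose boundaries are polygons, as claimed in the Boundary section. This is where I would expect any subtlety. Once that is granted, the two cases together with the $16$-vertex example in Figure~\ref{fig:minimal} show that the minimum is exactly $16$.
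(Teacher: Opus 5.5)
Your proposal is correct and follows the paper's argument. The no-outer-bridge case is Theorem~\ref{thm:main}; otherwise the tree $T$ has at least two leaves, each a leaf island with more than $7$ vertices by Lemma~\ref{lemma:leafisland}, which gives at least $16$ vertices, and Figure~\ref{fig:minimal} shows the bound is attained. Your parity refinement (each leaf island then has at least $9$ vertices, so a graph with an outer bridge has at least $18$) is valid but not needed for the result.
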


\section{Acknowledgments}
When we started writing this paper, we did not know the names of unit distance, matchstick, and penny graphs. We also did not know whether our theorem was new. We asked ChatGPT, but it was not helpful. After we figured out the names, we found several experts in penny graphs and asked whether our theorem was known; they said no. The authors thank David Eppstein, Arsenii Sagdeev, and Konrad Swanepoel for their prompt replies. Konrad Swanepoel also notified us that in the first version of our preprint, we considered only penny graphs without outer bridges, and we are very grateful to him for that.

\noindent
Tanya Khovanova \\
\textsc{
Department of Mathematics, Massachusetts Institute of Technology\\
77 Massachusetts Avenue, Building 2, Cambridge, MA, U.S.A. 02139}\\
\textit{E-mail address: }\texttt{tanyakh@yahoo.com}

\medskip

\noindent Alexander Karabegov \\
\textsc{
Department of Mathematics, Abilene Christian University\\
1695 Campus Ct, Onstead Science Center,
Abilene, TX, U.S.A. 79699}\\
\textit{E-mail address: }\texttt{axk02d@acu.edu}

\begin{thebibliography}{9}

\bibitem{DE} Eppstein, David (2018), Edge Bounds and Degeneracy of Triangle-Free Penny Graphs and Squaregraphs, {\it Journal of Graph Algorithms and Applications}, 22(3), 483--499. 

\bibitem{GT} Galperin, Grigorii and Tolpygo, Aleksei (1986), Moscow mathematical olympiads, Moscow, Prosveshchenie publ., 303 p. (in Russian).

\bibitem{AS} Sagdeev, Arsenii (2025), General penny graphs are at most $43/18$-dense, {\it Combinatorics and Number Theory}, Vol.~14, No.~1, 75--89.
\bibitem{KS} Swanepoel, Konrad J.\ (2009), Triangle-free minimum distance graphs in the plane, {\it Geombinatorics}, 19(1): 28--30.
\end{thebibliography}
\end{document}